\newtheorem{claim}{Claim}[section]
\newtheorem{theorem}[claim]{Theorem}
\newtheorem{lemma}[claim]{Lemma}
\theoremstyle{definition}
\newcommand{\soutg}{\bgroup\markoverwith{\textcolor{green}{\rule[.5ex]{2pt}{1pt}}}\ULon}
\newcommand{\soutb}{\bgroup\markoverwith{\textcolor{blue}{\rule[.5ex]{2pt}{1pt}}}\ULon}
\newcommand{\soutr}{\bgroup\markoverwith{\textcolor{red}{\rule[.5ex]{2pt}{1pt}}}\ULon}
\DeclareMathOperator*{\re}{Re}
\definecolor{DarkGreen}{rgb}{0,0.5,0.1} 
\definecolor{DarkBlue}{rgb}{0,0.1,0.5} %
\newcommand\soutD{\bgroup\markoverwith
{\textcolor{DarkGreen}{\rule[.5ex]{2pt}{1pt}}}\ULon}
\newcommand\soutP{\bgroup\markoverwith
{\textcolor{blue}{\rule[.5ex]{2pt}{1pt}}}\ULon}
\newcommand{\Hm}[1]{\leavevmode{\marginpar{\tiny%
$\hbox to 0mm{\hspace*{-0.5mm}$\leftarrow$\hss}%
\vcenter{\vrule depth 0.1mm height 0.1mm width \the\marginparwidth}%
\hbox to
0mm{\hss$\rightarrow$\hspace*{-0.5mm}}$\\\relax\raggedright #1}}}
\title[Spectral determinant for the wave equation with Dirac damping]{Spectral determinant for the wave equation on an interval with Dirac damping}
\author{David Krej\v{c}i\v{r}\'ik} 
\address{Department of Mathematics,
	Faculty of Nuclear Sciences and Physical Engineering,
	Czech Technical University in Prague,
	Trojanova 13, 12000 Prague, Czech Republic}
\email{david.krejcirik@fjfi.cvut.cz}
\author{Ji\v{r}\'{\i} Lipovsk\'{y}}
\address{Department of Physics, Faculty of Science, University of Hradec Kr\'alov\'e, Rokitansk\'eho 62,
500\,03 Hradec Kr\'alov\'e, Czechia}
\email{jiri.lipovsky@uhk.cz}
\date{April 16, 2024}
\begin{document}

\begin{abstract}
A closed formula for the spectral determinant for the wave equation
on a bounded interval, subject to Dirichlet boundary conditions
and an $\alpha$-multiple of the Dirac $\delta$-type damping,
is derived.
Depending on the choice of the branch cut of the logarithm used in its definition, the spectral determinant diverges either for $\alpha =2$ or $\alpha=-2$.
\end{abstract}
\maketitle

\section{Introduction}
A string instrument is classically modelled by the wave equation
on an interval $(0, L)$ of length $L>0$, 
subject to Dirichlet boundary conditions.
In 1982 Bamberger, Rauch, and Taylor \cite{BRT82} suggested 
explaining the playing of harmonics by modelling  
the finger pressure at a point $a \in (0, L)$ 
by a strongly localised frictional resistance around~$a$. 
More specifically, their mathematical model is 
the damped wave equation 
\begin{equation}\label{wave}
   \partial_{tt}u + \alpha \delta_a \partial_t u - \partial_{xx} u = 0\,,
\end{equation}
in space-time variables $(x,t) \in (0, L) \times (0,\infty)$,
where $\delta_a$ is the Dirac delta distribution centered at~$a$
and~$\alpha$ is a positive number.
In this paper, 
motivated by relativistic quantum mechanics \cite[Sec.~7]{KR23},
we allow~$\alpha$ to be an arbitrary complex number. 

It turns out that there is an abrupt change in time-evolution
properties of~\eqref{wave}
when the complex parameter~$\alpha$ takes values~$\pm 2$.
In fact, $\alpha=2$ is the optimal friction of~\cite{BRT82}.
To see it, the strategy of~\cite{BRT82} is to make a careful 
spectral analysis of the associated damped wave operator
(to be properly introduced in Section~\ref{sec:model})
\begin{equation}\label{damped}
  H_\alpha =
  \begin{pmatrix}
  -\alpha \delta_a &  \frac{\partial^2}{\partial x^2}
  \\ I & 0
  \end{pmatrix}\,.
\end{equation}

This spectral analysis was continued by Cox and Henrot in~\cite{CH08} 
by studying basis properties of the eigenfunctions. 
In particular, they show that the root vectors form the Riesz basis
if $\alpha \not=\pm 2$.
While the opposite implication is not established in~\cite{CH08},
it is true that the root vectors do not form the Riesz basis anymore
if $\alpha =\pm 2$.
In fact, some of the eigenvalues of this operator diverge to complex infinity 
when $\alpha$ approaches the two critical values~$\pm 2$,
losing thus even completeness.

Wild spectral properties of the damped wave operator
for $\alpha = \pm 2$
appear on unbounded geometries too, 
see \cite[Rem.~1]{KK8} and~\cite{KR23}.
What is more, the ``magical'' value~$2$ is explained in~\cite{KR23}
by considering the damped wave equation on non-compact star graphs.
It turns out, that the abrupt change in spectral properties happens precisely
at~$\pm \alpha$ equal the number of edges emanating from the vertex
where the distributional damping is placed.

The objective of this paper is to quantify the transition at 
$\alpha = \pm 2$ by considering yet another spectral quantity 
-- the spectral determinant.
This generalisation of the notion of the determinant of a matrix 
for possibly unbounded operators 
was introduced by Ray and Singer in 1971 \cite{RS71}. 
The spectral determinants were particularly
studied for the Sturm--Liouville operators \cite{LS77, GK19}, Laplacians in various domains \cite{AS94}, harmonic and anharmonic oscillators \cite{Fre18} or used in string theory or quantum field theory, 
see, \cite{Dun08} and references therein.
The present paper points out another class of operators
for which the spectral determinant can be computed in a closed form.

To avoid the fact that the product of eigenvalues~$\lambda_j$ 
of an operator~$H$ with compact resolvent  
may not be convergent, the spectral determinant of~$H$
is defined through the spectral zeta function 
\begin{equation}\label{zeta}
  \zeta(s) = \sum_{j=1}^\infty \lambda_j^{-s}
  \qquad \mbox{with} \qquad
  \lambda_j^{-s} = \mathrm{e}^{-s \log{\lambda_j}}
  \,.
\end{equation}
This sum is usually convergent in the half-plane $\mathrm{Re}\,s>s_0>0$. To give a proper meaning to the derivative of the zeta function at zero, the zeta function is meromorphically continued from the set where the sum converges to the rest of the complex plane in $s$. Since this continuation is unique, the value of $\zeta'(0)$ is properly defined and the determinant can be introduced as 
\begin{equation}
 \mathrm{det}(H) = \mathrm{exp}\,(-\zeta'(0))\,.\label{eq:det}
\end{equation}

However, the power of the eigenvalues in the definition of the zeta function~\eqref{zeta} 
depends on how we introduce the complex natural logarithm. One can choose different branches of the logarithm and the branch cut determines from which interval the arguments of the eigenvalues are chosen. Consequently, the spectral determinant may differ for different branch cuts, see \cite{QHS93, FL19, LM23}. This behaviour appears also for the problem considered in this paper. 

When we move the branch cut so that it crosses a finite number of eigenvalues, the spectral determinant does not change (see \cite{FL19, LM23}). As the number of eigenvalues of~$H_\alpha$ with the arguments 
in the intervals $(\beta_1,\beta_2)$ 
with $\frac{\pi}{2}<\beta_1<\beta_2<\frac{3\pi}{2}$
and $(\beta_3,\beta_4)$ with $-\frac{\pi}{2}<\beta_3<\beta_4<\frac{\pi}{2}$ is finite, one essentially has two independent choices of the branch cut. Namely, anywhere in the sectors with the argument in the intervals $(\beta_1,\beta_2)$ and $(\beta_3,\beta_4)$, respectively, as long as the cut does not intersect any eigenvalue (otherwise the spectral zeta function would not be properly defined).
To be more specific, we choose the first and second branch cuts 
as the rays $\mathrm{e}^{i(\pi+\varepsilon)} (0,\infty)$
and $\mathrm{e}^{-i\varepsilon} (0,\infty)$, respectively,
where $\varepsilon \in (0,\frac{\pi}{2})$ is always such that there are no eigenvalues of~$H_\alpha$ with arguments in the intervals $(\pi, \pi+\varepsilon]$ and $[-\varepsilon,0)$ (in principle, $\varepsilon$ can depend on~$\alpha$).
To make the long story short, 
we concisely say that the branch cut is either
``below the negative real axis'' or ``below the positive real axis'',
respectively.

Writing $\mathrm{det}(\alpha) = \mathrm{det}(H_\alpha)$ 
in the present case,
our main result reads as follows.
\begin{theorem}\label{thm:main}
If the branch cut of the logarithm is below the negative real axis, then
$$
  \mathrm{det}(\alpha) =
\begin{cases}
  \displaystyle \frac{4L}{2-\alpha}
  & \mbox{if} \quad \alpha\ne 2 \mbox{ and } a \in (0,L)
  \,, \\ 
  2L 
  & \mbox{if} \quad \alpha=2 \mbox{ and } a \not= L/2
  \,, \\ 
  L 
  & \mbox{if} \quad \alpha=2 \mbox{ and } a = L/2
  \,. \\ 
\end{cases}
$$
If the branch cut of the logarithm is below the positive real axis, then
$$
  \mathrm{det}(\alpha) =
\begin{cases}
  \displaystyle - \frac{4L}{2+\alpha}
  & \mbox{if} \quad \alpha\ne -2 \mbox{ and } a \in (0,L)
  \,, \\ 
  -2L 
  & \mbox{if} \quad \alpha=-2 \mbox{ and } a \not= L/2
  \,, \\ 
  -L 
  & \mbox{if} \quad \alpha=-2 \mbox{ and } a = L/2
  \,. \\ 
\end{cases}
$$
\end{theorem}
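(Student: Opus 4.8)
The plan is to reduce the computation to the secular function of $H_\alpha$ and then to apply a contour integral representation of the spectral zeta function. Writing the eigenvalue equation for $H_\alpha$ componentwise gives $v=\lambda u$ and $u''-\lambda^2 u=\alpha\lambda\,\delta_a u$ with $u(0)=u(L)=0$, so that $u$ is $A\sinh(\lambda x)$ on $(0,a)$ and $B\sinh(\lambda(L-x))$ on $(a,L)$; matching $u$ at $a$ together with the jump relation $u'(a^+)-u'(a^-)=\alpha\lambda\,u(a)$ yields the characteristic equation $F(\lambda):=\sinh(\lambda L)+\alpha\sinh(\lambda a)\sinh(\lambda(L-a))=0$. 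Since $F(0)=0$ while $\lambda=0$ is not an eigenvalue, the relevant entire function is $G(\lambda):=F(\lambda)/\lambda$: it has order one, $G(0)=L$, and --- once one checks that the order of a zero of $G$ equals the algebraic multiplicity of the corresponding eigenvalue of $H_\alpha$ --- its zero set is exactly the spectrum. I would then record the asymptotics
\[
G(\lambda)\sim\tfrac{2+\alpha}{4}\,\frac{e^{\lambda L}}{\lambda}\quad(\operatorname{Re}\lambda\to+\infty),\qquad G(\lambda)\sim\tfrac{2-\alpha}{4}\,\frac{e^{-\lambda L}}{\lambda}\quad(\operatorname{Re}\lambda\to-\infty),
\]
the facts that all eigenvalues lie in a vertical strip $|\operatorname{Re}\lambda|\le M$ with $N(R):=\#\{j:|\lambda_j|\le R\}\sim 2LR/\pi$ (so $\zeta$ converges for $\operatorname{Re}s>1$), and that this strip confinement persists at $\alpha=\pm2$, where however the leading terms above degenerate.

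For $\operatorname{Re}s>1$ I would write $\zeta(s)=\frac{1}{2\pi i}\oint_{\mathcal C}\lambda^{-s}\,\frac{G'(\lambda)}{G(\lambda)}\,d\lambda$ over a contour $\mathcal C$ enclosing the spectrum, using the prescribed branch of $\lambda^{-s}$ and not crossing the branch cut. Since the eigenvalues sit in a vertical strip while the branch cut is a nearly horizontal ray, $\mathcal C$ can be taken as the boundary of a wide strip whose vertical walls are then pushed to $\mp\infty$: on the walls $G'/G$ is bounded and $|\lambda^{-s}|$ decays, so the wall contributions vanish for $\operatorname{Re}s>1$, and only a Hankel-type contour hugging the cut survives (together with a small circle about the origin that contributes nothing because $G(0)\ne0$). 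Parametrising along the ray, using $\frac{G'}{G}\,d\lambda=d\log G$ and integrating by parts once after subtracting $\log G(0)=\log L$ (to control the boundary term at $0$), I expect to arrive, for the cut along the negative real axis, at
\[
\zeta(s)=\frac{s\sin\pi s}{\pi}\int_0^\infty r^{-s-1}\log\frac{g(r)}{L}\,dr,\qquad g(r):=G(-r)=\frac{\sinh(rL)-\alpha\sinh(ra)\sinh(r(L-a))}{r},
\]
and at the analogous identity with prefactor $\dfrac{s\,(1-e^{-2\pi i s})}{2\pi i}$ and with $g$ replaced by $r\mapsto G(r)$ for the cut along the positive real axis. (For generic $\alpha$ the cut may be placed on the axis itself; sliding it across the finitely many eigenvalues possibly lying there leaves $\det(\alpha)$ unchanged. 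As a warm-up I would verify that $\alpha=0$ reproduces the classical value $\det(-\Delta_D)=2L$ on $(0,L)$.)

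Next I would split $\int_0^\infty=\int_0^1+\int_1^\infty$. The first integral is analytic at $s=0$ and, since the prefactor vanishes there to second order, it does not contribute to $\zeta'(0)$. The second is evaluated from the expansion $\log\dfrac{g(r)}{L}=\ell\,r-\log r+c_0+o(1)$ as $r\to\infty$, which gives $\int_1^\infty r^{-s-1}\log\dfrac{g(r)}{L}\,dr=\dfrac{\ell}{s-1}-\dfrac{1}{s^{2}}+\dfrac{c_0}{s}+(\text{analytic at }0)$. Multiplying by the prefactor --- which behaves like $s^2$, with an additional $-i\pi s^3$ term in the positive-axis case --- and reading off the Laurent coefficients at $s=0$ yields $\zeta(0)=-1$ and $\zeta'(0)=c_0$, respectively $c_0+i\pi$, whence $\det(\alpha)=e^{-c_0}$, respectively $-e^{-c_0}$. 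The constant $c_0$ is the one produced in the first step: generically $g(r)\sim\tfrac{2-\alpha}{4}\,r^{-1}e^{rL}$ (first cut) and $G(r)\sim\tfrac{2+\alpha}{4}\,r^{-1}e^{rL}$ (second cut), i.e.\ $c_0=\log\tfrac{2-\alpha}{4L}$, resp.\ $\log\tfrac{2+\alpha}{4L}$, giving $\det(\alpha)=\dfrac{4L}{2-\alpha}$, resp.\ $-\dfrac{4L}{2+\alpha}$; at $\alpha=2$ (first cut) and $\alpha=-2$ (second cut) the identity $2\sinh(ra)\sinh(r(L-a))=\cosh(rL)-\cosh(r(L-2a))$ makes the relevant function behave like $\tfrac{1}{2}\,r^{-1}e^{r|L-2a|}$ when $a\ne L/2$ and like $r^{-1}$ when $a=L/2$, so that $c_0=\log\tfrac{1}{2L}$ or $\log\tfrac{1}{L}$ and $\det(\alpha)=2L,L$, resp.\ $-2L,-L$. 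This reproduces the asserted table entry by entry.

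The main obstacle is the rigorous control of these contour manipulations, specifically: (i) identifying the order of a zero of $G$ with the algebraic multiplicity of the corresponding eigenvalue of the non-self-adjoint $H_\alpha$ (which may a priori possess Jordan chains) --- this is cleanest via an explicit formula for the resolvent kernel; (ii) the uniform bound on $G'/G$ along the receding walls that forces the wall integrals to vanish; and (iii) the strip confinement of the spectrum for all complex $\alpha$, and in particular the perturbative analysis at $\alpha=\pm2$ showing that there only the exponential rate of $g$ at infinity changes while the eigenvalues remain bounded away from $\infty$. This last point is exactly the mechanism that splits each case into three and that accounts for the blow-up of $\det(\alpha)$ as $\alpha\to\pm2$.
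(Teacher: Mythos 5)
Your proposal is correct in outline and arrives at all six values of the theorem, but it follows a genuinely different route from the paper. You use the contour-integral (Hankel-type) representation of the spectral zeta function built from the characteristic function $G(\lambda)=\lambda^{-1}\bigl(\sinh(\lambda L)+\alpha\sinh(\lambda a)\sinh(\lambda(L-a))\bigr)$, so that $\zeta'(0)$ is read off from the constant term $c_0$ in the large-$|\lambda|$ expansion of $\log G$ along the ray opposite to the branch cut; your asymptotics $\frac{2\mp\alpha}{4}$, the degenerate rates $\frac12 e^{r|L-2a|}$ and $1$ at $\alpha=\pm 2$, the prefactor computations giving $\zeta(0)=-1$ and $\zeta'(0)=c_0$ (resp.\ $c_0+i\pi$), and the check $\alpha=0\mapsto 2L$ are all consistent with the stated result. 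The paper instead first assumes $a/L$ rational, reduces the spectral condition to a polynomial in $z=\mathrm{e}^{2L_0\lambda}$, expresses $\zeta$ explicitly through Riemann and Hurwitz zeta functions, evaluates $\zeta'(0)$ from their known special values and the products $\prod_k(1-\mathrm{e}^{\mp 2L_0\mu_k})$, and then extends to irrational $a/L$ by a continuity argument resting on eigenvalue localization (Bellman--Cooke) and analytic perturbation theory. Your approach buys uniformity: all $a\in(0,L)$ and all $\alpha$ (including $\pm2$) are treated in one stroke, with the case distinction emerging transparently from the change of exponential rate of $G$ at infinity; the paper's approach buys explicitness, needing only special values of classical zeta functions and elementary polynomial identities, at the cost of the separate density/continuity step. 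The obstacles you flag are real and are where the remaining work lies: (i) matching orders of zeros of $G$ with algebraic multiplicities of the non-self-adjoint $H_\alpha$ (a point the paper also passes over quickly), (ii) justifying the vanishing of the receding contour pieces using the boundedness of $G'/G$ away from the zeros together with $|\lambda^{-s}|=O(|\lambda|^{-\operatorname{Re}s})$ for $\operatorname{Re}s>1$, and (iii) the strip confinement of the spectrum, which the paper establishes via its Lemma on exponential sums and which you would need in the same form; none of these appears to threaten the argument.
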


The results of the theorem are illustrated in Figure~\ref{fig:det}.

\begin{figure}[h]
\centering
\begin{subfigure}{.47\textwidth}
  \centering\captionsetup{width=.9\linewidth}
  \includegraphics[width=.9\linewidth]{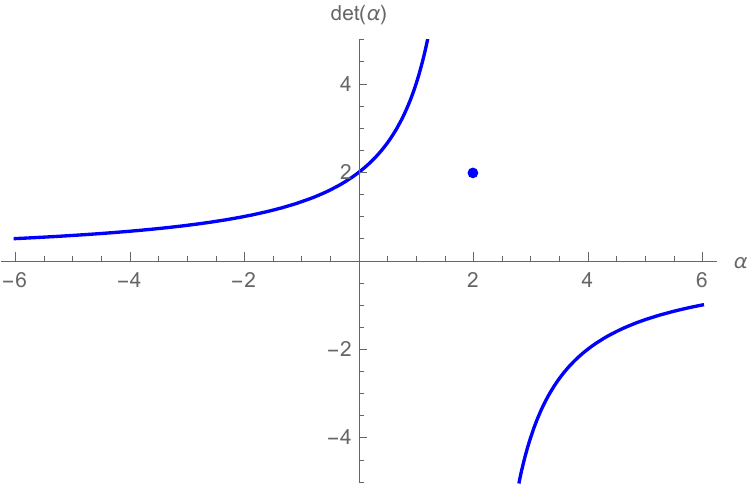}
  \caption{The branch cut is below the negative real axis, 
  $L=1$, $a\ne \frac{1}{2}$.}
  \label{fig1}
\end{subfigure}%
\hspace{0.05\textwidth}
\begin{subfigure}{.47\textwidth}
  \centering\captionsetup{width=.9\linewidth}
  \includegraphics[width=.9\linewidth]{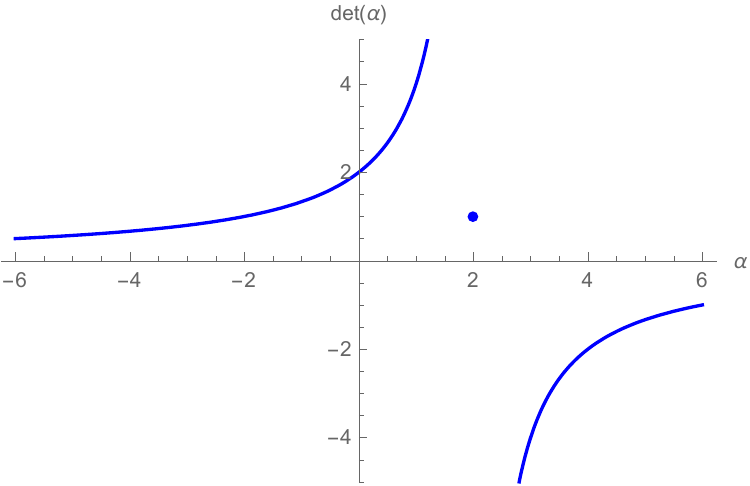}
  \caption{The branch cut is below the negative real axis, 
  $L=1$, $a = \frac{1}{2}$.}
  \label{fig2}
\end{subfigure}%
\\
\begin{subfigure}{.47\textwidth}
  \centering\captionsetup{width=.9\linewidth}
  \includegraphics[width=.9\linewidth]{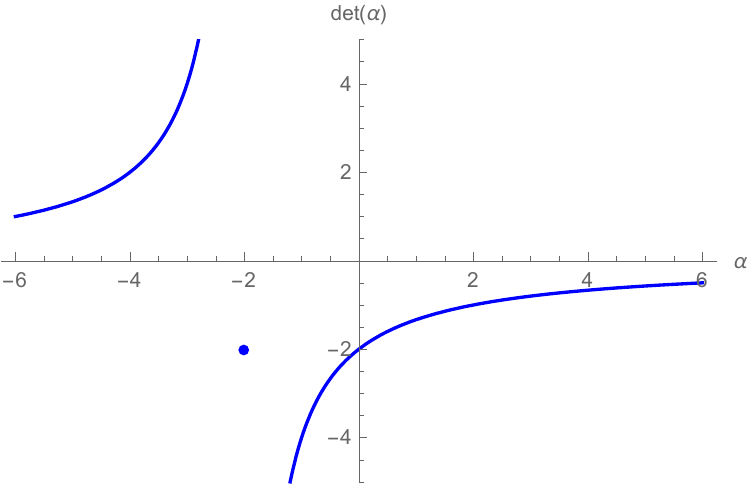}
  \caption{The branch cut is below the positive real axis, 
  $L=1$, $a\ne \frac{1}{2}$.}
  \label{fig3}
\end{subfigure}%
\hspace{0.05\textwidth}
\begin{subfigure}{.47\textwidth}
  \centering\captionsetup{width=.9\linewidth}
  \includegraphics[width=.9\linewidth]{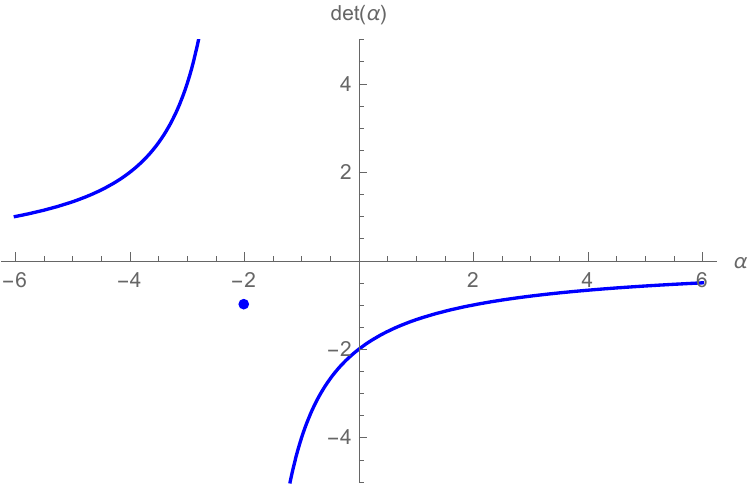}
  \caption{The branch cut is below the positive real axis, 
  $L=1$, $a = \frac{1}{2}$.}
  \label{fig4}
\end{subfigure}%
\caption{Dependence of the spectral determinant on the real values of the parameter $\alpha$ for both branch cuts and different positions of the $\delta$-damping.}
\label{fig:det}
\end{figure}

The paper is structured as follows. 
In Section~\ref{sec:model} we rigorously introduce the model. 
Sections~\ref{sec:3} and~\ref{sec:4} are devoted to the proof of Theorem~\ref{thm:main}. 
The strategy is to consider the case of rational ratio $a/L$ first 
(Section~\ref{sec:3}) and then argue by continuity (Section~\ref{sec:4}).

\section{The model}\label{sec:model}
Recall that
the wave equation~\eqref{wave} is considered for any complex number~$\alpha$
and subject to the Dirichlet boundary conditions $u(0,t) = u(L,t) = 0$
for all times $t > 0$. 
In order to reconsider it as an evolution problem for a linear operator,
we follow~\cite{BRT82} and introduce the pair 
$U = (v,w)^\mathrm{T} = (\frac{\partial u}{\partial t},u)^\mathrm{T	}$ 
in the Hilbert space 
$$
  \mathcal{H} = L^2((0,L))\oplus H^1((0,L))
  \,.
$$ 
Then the evolution problem~\eqref{wave} 
is equivalent to 
$$
  \frac{\partial U}{\partial t} = H_\alpha U
$$ 
with the operator~$H_\alpha$ formally acting as in~\eqref{damped} in~$\mathcal{H}$. 
More specifically, the distributional damping
as well as the Dirichlet boundary conditions 
are introduced via the operator domain.
Thus the rigorous definition of~$H_\alpha$ reads
$$
\begin{aligned}
  H_\alpha \begin{pmatrix}v\\w\end{pmatrix} (x)
  &= \begin{pmatrix} w''(x)\\v(x)\end{pmatrix}
  \quad \mbox{for every} \quad
  x \in (0,a) \cup (a,L) \,,
  \\
   D(H_\alpha) 
   &= \left\{
   \left(
   \begin{aligned}
   v &\in H_0^1((0,L))
   \\
   w &\in H_0^1((0,L))\cap H^2((0,a))\cap H^2((a,L))
   \end{aligned}
   \right)
   : \
   w'(a^+)-w'(a^-) = \alpha v(a)\right\} .
\end{aligned}
$$
The operator~$H_\alpha$ is maximal dissipative 
(respectively, maximal accretive) 
if $\re \alpha \geq 0$ 
(respectively, $\re\alpha \leq 0$).
In particular, $H_\alpha$~is skew-adjoint if $\re\alpha=0$.
Moreover, $H_\alpha$~is an operator with compact resolvent.
We refer to \cite{BRT82,CH08, KR23} for more details. 
An alternative approach based on Schur complement
can be found in \cite{Ger24}.

Since the domain $D(H_\alpha)$ is compactly embedded in~$\mathcal{H}$,
the resolvent of~$H_\alpha$ is compact. 
Consequently, the spectrum of~$H_\alpha$ is purely discrete.
We arrange the eigenvalues in a sequence $\{\lambda_j\}_{j=1}^\infty$,
where each eigenvalue is repeated according to its multiplicity. 
It is easy to see  (see, e.g., \cite{CH08}) that
the eigenvalues of~$H_\alpha$ satisfy the implicit equation 
\begin{equation}
  \sinh{(L\lambda)}+\alpha \sinh{(a\lambda)}\sinh{((L-a)\lambda)} = 0\,.\label{eq:basic}
\end{equation}
An analysis of this equation implies that
$\lambda_j$~is asymptotically proportional to~$j$ as $j \to \infty$.
It follows that the sum in the definition of the spectral zeta function~\eqref{zeta} 
is convergent in the half-plane $\re s > 1$. 
For those $s$, for which the above sum is not convergent, we define its values as the unique meromorphic continuation of the spectral zeta function from the region where it converges. The spectral determinant of~$H_\alpha$
is then defined by formula~\eqref{eq:det}.

As explained above Theorem~\ref{thm:main},
we consider two branches of the complex logarithm in~\eqref{zeta}:
either the ray $\mathrm{e}^{i(\pi+\varepsilon)} (0,\infty)$
or $\mathrm{e}^{-i\varepsilon} (0,\infty)$.  
Any other eligible choice of the branch cut 
is already covered by Theorem~\ref{thm:main}.

\section{Rational position of the Dirac damping condition}\label{sec:3}
Our first step in proving Theorem~\ref{thm:main}
is the analysis of the case when the $\delta$-damping divides 
the interval $(0,L)$ into two parts with rationally related lengths. 
Let us therefore consider the situation when 
$$
  a = p L_0
  \qquad \mbox{and} \qquad
  L-a = q L_0
$$ 
with $p$ and $q$ being incommensurable positive integers, 
without loss of generality $p\geq q$,
and~$L_0$ is a positive number actually defined by the requirement. 
Then the equation \eqref{eq:basic} can be rewritten into the form 
$$
  (2+\alpha)\mathrm{e}^{2(p+q)L_0\lambda}-\alpha \mathrm{e}^{2 p L_0 \lambda}-\alpha \mathrm{e}^{2 q L_0 \lambda}+(\alpha-2) = 0\,.
$$
This is a polynomial equation in the variable $z = \mathrm{e}^{2 L_0\lambda}$ with one of the roots equal to 1. 
Factoring out the $z-1$ term one finds
\begin{align}
   (\alpha+2)z^{p+q-1}+(\alpha+2)z^{p+q-2}+\dots (\alpha+2)z^p+2z^{p-1}+2z^{p-2}+\dots + 2z^q &\\
  +(2-\alpha)z^{q-1}+(2-\alpha)z^{q-2}+\dots+2-\alpha &= 0\,.\label{eq:z3}
\end{align}
For $\alpha \ne -2$ we obtain, after dividing by $\alpha+2$,
\begin{align}
   z^{p+q-1}+z^{p+q-2}+\dots z^p+\frac{2}{\alpha+2}z^{p-1}+\frac{2}{\alpha+2}z^{p-2}+\dots + \frac{2}{\alpha+2}z^q &\\
  +\frac{2-\alpha}{\alpha+2}z^{q-1}+\frac{2-\alpha}{\alpha+2}z^{q-2}+\dots+\frac{2-\alpha}{\alpha+2} &= 0\,.\label{eq:z}
\end{align}
The left-hand side of the last equation can be rewritten as
\begin{equation}
  \prod_{k =1}^{p+q-1}(z-z_k)\,,\label{eq:z2}
\end{equation}
where $z_k$, $k = 1,\dots, p+q-1$ are the roots of \eqref{eq:z}. Clearly, one finds, by comparing the left-hand side of \eqref{eq:z} and \eqref{eq:z2} after substituting $z=1$, that
\begin{equation}
  \prod_{k =1}^{p+q-1}(1-\mathrm{e}^{2 L_0\mu_k}) = q+\frac{2}{\alpha+2}(p-q)+\frac{2-\alpha}{\alpha+2}q = \frac{2(p+q)}{\alpha+2}\,,\label{eq:prod1}
\end{equation}
where $\mu_k =\frac{1}{2L_0} \log{z_k}$ with $\log$ denoting the natural logarithm. 
Although we can use any $\mu_k$ satisfying the defining relation in our construction, for unambiguity of the definition we use the solutions with $\mathrm{Im\,}\mu_k\in (-\frac{\pi}{2L_0},\frac{\pi}{2L_0}]$. 
 
The equation \eqref{eq:z3} can be for $\alpha \ne 2$ written also as the equation for the variable $y =  \mathrm{e}^{-2 L_0\lambda}$ as
\begin{align}
   y^{p+q-1}+y^{p+q-2}+\dots y^p+\frac{2}{2-\alpha}y^{p-1}+\frac{2}{2-\alpha}y^{p-2}+\dots + \frac{2}{2-\alpha}y^q &\\
  +\frac{\alpha+2}{2-\alpha}y^{q-1}+\frac{\alpha+2}{2-\alpha}y^{q-2}+\dots+\frac{\alpha+2}{2-\alpha}&= 0\,.\label{eq:y}
\end{align} 
Rewriting the left-hand side as $\prod_{k =1}^{p+q-1}(y-y_k)$ with $y_k$, $k = 1,\dots, p+q-1$ being its roots and again substituting $y=1$ one obtains from \eqref{eq:y}
\begin{equation}
  \prod_{k =1}^{p+q-1}(1-\mathrm{e}^{-2 L_0\mu_k}) = q+\frac{2}{2-\alpha}(p-q)+\frac{\alpha+2}{2-\alpha}q = \frac{2(p+q)}{2-\alpha}\,.\label{eq:prod2}
\end{equation}
We will use the formul\ae\ \eqref{eq:prod1} and \eqref{eq:prod2} later.

The first set of eigenvalues of the considered problem is $\lambda_{1j} = \frac{j\pi i}{L_0}$, $j\in\mathbb{Z}\backslash \{0\}$. This set follows from the equation $z=1$. Zero is not an eigenvalue because the corresponding eigenfunction would be identically zero. The equation \eqref{eq:z} yields the eigenvalues
$$
  \lambda_{2kj} = \mu_k+\frac{\pi i}{L_0}j \,,\quad j\in \mathbb{Z}\,.
$$
Note that $\lambda_{2kj}$ for some $k$ diverge to complex infinity as $\alpha \to\pm 2$. Also note that it follows from~\eqref{eq:prod1} that $\mu_k \ne 0$ and hence all $\lambda_{2kj}$ are present in the spectrum.

\subsection{First branch cut}
Let us start to study the spectral determinant for 
the branch cut of the logarithm to be the ray 
$\mathrm{e}^{i(\pi+\varepsilon)} (0,\infty)$.

The spectral zeta function is 
$$
 \zeta(s) = \sum_{j\in\mathbb{Z}\backslash\{0\}}\lambda_{1j}^{-s}+\sum_{k=1}^{p+q-1}\sum_{j\in\mathbb{Z}}\lambda_{2kj}^{-s}\,.
$$
Note that none of $\mu_k$'s is zero due to \eqref{eq:prod1} and hence the sum in the second term goes through~$\mathbb{Z}$. The equation above can be rewritten as
\begin{align}
  \zeta(s) = & \sum_{j=1}^{\infty} \left(\frac{\pi}{L_0}\right)^{-s} \mathrm{e}^{-i\frac{\pi}{2}s} j^{-s}+\sum_{j=1}^{\infty} \left(\frac{\pi}{L_0}\right)^{-s} \mathrm{e}^{i\frac{\pi}{2}s} j^{-s} + \sum_{k=1}^{p+q-1}\mu_k^{-s} 
\\ 
& +\sum_{k=1}^{p+q-1}\left(\sum_{j=1}^{\infty} \left(\mu_k +\frac{\pi}{L_0}i j\right)^{-s} +\sum_{j=1}^{\infty} \left(\mu_k -\frac{\pi}{L_0}i j\right)^{-s} \right)\,.\label{eq:ratzeta0}
\end{align}

Defining the Riemann zeta function as $\zeta_\mathrm{R} = \sum_{j=1}^\infty j^{-s}$ for complex $s$ and Hurwitz zeta function as $\zeta_\mathrm{H}(s,c) = \sum_{j=0}^{\infty} (j+c)^{-s}$ for complex parameters $s$ and $c$ we can rewrite the previous expression as
\begin{align}
  \zeta(s) & = 2\cos{\frac{\pi s}{2}}\mathrm{e}^{s\log{\frac{L_0}{\pi}}}\zeta_\mathrm{R}(s) + \sum_{k=1}^{p+q-1}\mathrm{e}^{-s \log{\mu_k}}+\sum_{k=1}^{p+q-1}\left[\left(\frac{\pi}{L_0}\right)^{-s}\mathrm{e}^{-\frac{i\pi}{2}s}\zeta_\mathrm{H}(s,1-i\frac{\mu_k L_0}{\pi})\right.
\\
  & \hspace*{5mm} \left.+\left(\frac{\pi}{L_0}\right)^{-s}\mathrm{e}^{\frac{i\pi}{2}s}\zeta_\mathrm{H}(s,1+i\frac{\mu_k L_0}{\pi})\right]
\\
  & = 2\cos{\frac{\pi s}{2}}\mathrm{e}^{s\log{\frac{L_0}{\pi}}}\zeta_\mathrm{R}(s) +\sum_{k=1}^{p+q-1} \left[\mathrm{e}^{-s \log{\mu_k}}+\mathrm{e}^{s\log{\frac{L_0}{\pi}}}\mathrm{e}^{-\frac{i\pi}{2}s}\zeta_\mathrm{H}(s,1-i\frac{\mu_k L_0}{\pi})\right.
\\
&\hspace*{5mm} \left.+\mathrm{e}^{s\log{\frac{L_0}{\pi}}}\mathrm{e}^{\frac{i\pi}{2}s}\zeta_\mathrm{H}(s,1+i\frac{\mu_k L_0}{\pi})\right].
\label{eq:ratzeta1}
\end{align}
Note that the factor of 1 in the second argument of the Hurwitz zeta function appears because in its definition the sum goes from 0, not from 1 as \eqref{eq:ratzeta0}.

The right-hand side converges for $\re s>1$ and can be uniquely meromorphically extended to $s=0$. Our aim is to compute its derivative with respect to $s$ at $s=0$. To do that, we will use the following expressions (see \cite[eqs. 64:10:4 and 64:3:2]{SO87})
\begin{align}
  \zeta_\mathrm{R}(0) & = -\frac{1}{2}\,,\label{eq:riemann1}\\ 
  \zeta_\mathrm{R}'(0) & = -\frac{1}{2}\log{(2\pi)}\,,\label{eq:riemann2}\\
  \zeta_\mathrm{H}(0,1+c) & = -\frac{1}{2}-c\,,\label{eq:hurwitz1}\\
  \frac{\partial}{\partial s}\zeta_\mathrm{H}(0,1+c) & = \log\left(\frac{\Gamma(1+c)}{\sqrt{2\pi}}\right)\label{eq:hurwitz2}\,.  
\end{align}
Using the above formulae, 
one can differentiate the expression \eqref{eq:ratzeta1} at $s=0$ and find
\begin{align*}
  \zeta'(0) &= 2 \log{\frac{L_0}{\pi}}\zeta_\mathrm{R}(0)+ 2 \zeta_\mathrm{R}'(0)+\sum_{k=1}^{p+q-1}\left[-\log{\mu_k}+\left(\log{\frac{L_0}{\pi}}-\frac{i\pi}{2}\right)\zeta_\mathrm{H}(0,1-i\frac{\mu_k L_0}{\pi})\right.\\ 
& \hspace*{5mm} + \left.\left(\log{\frac{L_0}{\pi}}+\frac{i\pi}{2}\right)\zeta_\mathrm{H}(0,1+i\frac{\mu_k L_0}{\pi})+\frac{\partial}{\partial s}\zeta_\mathrm{H}(0,1-i\frac{\mu_k L_0}{\pi})+\frac{\partial}{\partial s}\zeta_\mathrm{H}(0,1+i\frac{\mu_k L_0}{\pi})\right]\\
 & = -\log{\frac{L_0}{\pi}}-\log{(2\pi)}+\sum_{k=1}^{p+q-1}\left[-\log{\mu_k}-\log{\frac{L_0}{\pi}}+\mu_k L_0\right.
\\
& \hspace*{5mm} \left.+ \log{\left(\frac{\Gamma(1-i\frac{\mu_k L_0}{\pi})\Gamma(1+i\frac{\mu_k L_0}{\pi})}{2\pi}\right)}\right]\,.
\end{align*}

Using (see, e.g., \cite[43:11:2]{SO87})
\begin{equation}
  \Gamma(1-ic)\Gamma(1+ic) = \frac{c\pi}{\sinh{c\pi}}\,, 
\end{equation}
we obtain
$$
  \zeta'(0) = -(p+q) \log{2} -\log{L_0}+\sum_{k=1}^{p+q-1} [L_0\mu_k-\log{(\sinh{(L_0\mu_k)})}]\,.
$$
We find
\begin{equation}
  \mathrm{det} (\alpha)= \mathrm{e}^{-\zeta'(0)} = 2^{p+q} L_0 \prod_{k=1}^{p+q-1}[\sinh{(L_0\mu_k)}\,\mathrm{e}^{-L_0\mu_k}]  = 2L_0 \prod_{k=1}^{p+q-1}(1- \mathrm{e}^{-2 L_0 \mu_k})\,.
\end{equation}
Using \eqref{eq:prod2} we obtain
\begin{equation}
  \mathrm{det} (\alpha)=  \frac{4L_0(p+q)}{2-\alpha} = \frac{4L}{2-\alpha}\,.\label{eq:res1}
\end{equation}

\subsection{Second branch cut}
For the cut being the ray 
$\mathrm{e}^{-i\varepsilon} (0,\infty)$,
the calculation is very similar. We obtain
\begin{align}
  \zeta(s) &= 2\mathrm{e}^{-i\pi s}\cos{\frac{\pi s}{2}}\mathrm{e}^{s\log{\frac{L_0}{\pi}}}\zeta_\mathrm{R}(s) +\sum_{k=1}^{p+q-1} \left[\mathrm{e}^{-s \log{\mu_k}}+\left(\frac{\pi}{L_0}\right)^{-s}\mathrm{e}^{-\frac{i\pi}{2}s}\zeta_\mathrm{H}(s,1-i\frac{\mu_k L_0}{\pi})\right.
\\
 & \hspace*{5mm}+\left.\left(\frac{\pi}{L_0}\right)^{-s}\mathrm{e}^{-\frac{3i\pi}{2}s}\zeta_\mathrm{H}(s,1+i\frac{\mu_k L_0}{\pi})\right]\,.
\end{align}
Hence 
\begin{align*}
  \zeta'(0) &= 2 (\log{\frac{L_0}{\pi}}-i\pi)\zeta_\mathrm{R}(0)+ 2 \zeta_\mathrm{R}'(0)+\sum_{k=1}^{p+q-1}\left[-\log{\mu_k}+\left(\log{\frac{L_0}{\pi}}-\frac{i\pi}{2}\right)\zeta_\mathrm{H}(0,1-i\frac{\mu_k L_0}{\pi})\right.\\ 
& \hspace*{5mm} + \left.\left(\log{\frac{L_0}{\pi}}-\frac{3i\pi}{2}\right)\zeta_\mathrm{H}(0,1+i\frac{\mu_k L_0}{\pi})+\frac{\partial}{\partial s}\zeta_\mathrm{H}(0,1-i\frac{\mu_k L_0}{\pi})+\frac{\partial}{\partial s}\zeta_\mathrm{H}(0,1+i\frac{\mu_k L_0}{\pi})\right]\\
 & = i\pi -\log{\frac{L_0}{\pi}}-\log{(2\pi)}+\sum_{k=1}^{p+q-1}\left[-\log{\mu_k}-\log{\frac{L_0}{\pi}}-i\pi-\mu_k L_0\right.\\
 & \hspace*{5mm}+\left. \log{\left(\frac{\Gamma(1-i\frac{\mu_k L_0}{\pi})\Gamma(1+i\frac{\mu_k L_0}{\pi})}{2\pi}\right)}\right]\\
 & = i\pi(2-p-q)-\log{L_0}-(p+q)\log{2}-\sum_{k=1}^{p+q-1}[\log{(\sinh{(L_0\mu_k)})}+L_0\mu_k]\,.
\end{align*}
The determinant is equal to
\begin{align}
  \mathrm{det}(\alpha) &= \mathrm{e}^{-\zeta'(0)} \\
 & = (-2)^{p+q}L_0\prod_{k=1}^{p+q-1}\sinh{(L_0\mu_k)}\mathrm{e}^{L_0\mu_k} \\
 & = (-2)^{p+q}L_0 (-2)^{-p-q+1}\prod_{k=1}^{p+q-1}(1-\mathrm{e}^{2L_0\mu_k}) \\
 & = - 4 L_0 \frac{p+q}{\alpha+2} \\
 & = -\frac{4L}{\alpha+2}\,,\label{eq:res2}
\end{align}
where we have used \eqref{eq:prod1}. 

\subsection{Cases $\alpha=\pm 2$}
To solve the singular situations of $\alpha$ equal to $\pm 2$, let us start with the general case $p>q$ first. We will solve $p=q=1$ later.

For $\alpha=-2$ the equation \eqref{eq:z3} gives 
\begin{equation}
  z^{p-1}+z^{p-2}+\dots +z^q+2z^{q-1}+2 z^{q-2}+\dots +2 = 0\,.\label{eq:pm2_z1}
\end{equation}
Let $z_{-2,k}=\mathrm{e}^{2\mu_{-2,k} L_0}$, $k=1,\dots, p-1$ be its roots. Then similarly to the construction for general $\alpha$ we get by substituting $z=1$ that
\begin{equation}
  \prod_{k=1}^{p-1} (1-\mathrm{e}^{2\mu_{-2,k} L_0}) = p-q+2q = p+q\,.\label{eq:pm2_prod1}
\end{equation}
Dividing \eqref{eq:pm2_z1} by $2z^{p-1}$ we get the equation for $y = \mathrm{e}^{-2\lambda L_0}$
\begin{equation}
  y^{p-1}+y^{p-2}+\dots +y^{p-q}+\frac{1}{2}y^{p-q-1}+\frac{1}{2}y^{p-q-2}+\dots +\frac{1}{2} = 0\,.\label{eq:pm2_y1}
\end{equation}
For its roots $y_{-2,k}=\mathrm{e}^{-2 \mu_{-2,k} L_0}$, $k=1,\dots, p-1$ we get
\begin{equation}
  \prod_{k=1}^{p-1} (1-\mathrm{e}^{-2\mu_{-2,k} L_0}) = q+\frac{1}{2}(p-q) = \frac{1}{2}(p+q)\,.\label{eq:pm2_prod2}
\end{equation}

The construction of the spectral zeta function and the spectral determinant goes through similarly as in the general case. For the first branch cut we obtain
$$
  \zeta'(0) = -p\log{2}-\log{L_0}+\sum_{k=1}^{p-1} [L_0\mu_{-2,k}-\log{(\sinh{(L_0\mu_{-2,k})})}]
$$
and so
$$
  \mathrm{det}(-2) = 2^{p} L_0 \prod_{k=1}^{p-1}[\sinh{(L_0\mu_{-2,k})}\,\mathrm{e}^{-L_0\mu_{-2,k}}] = 2L_0 \prod_{k=1}^{p-1}(1- \mathrm{e}^{-2 L_0 \mu_{-2,k}}) = 2L_0 \frac{1}{2}(p+q) = L\,,
$$
where we have used \eqref{eq:pm2_prod2}.

The second branch cut yields
$$
  \zeta'(0) = (2-p) i\pi -p\log{2}-\log{L_0}-\sum_{k=1}^{p-1} [L_0\mu_{-2,k}+\log{(\sinh{(L_0\mu_{-2,k})})}]
$$
and so
$$
  \mathrm{det}(-2) = (-2)^{p} L_0 \prod_{k=1}^{p-1}[\sinh{(L_0\mu_{-2,k})}\,\mathrm{e}^{L_0\mu_{-2,k}}] = -2L_0 \prod_{k=1}^{p-1}(1- \mathrm{e}^{2 L_0 \mu_{-2,k}}) = -2L_0 (p+q) = -2L\,,
$$
where we have used \eqref{eq:pm2_prod1}. 

For $\alpha = 2$, we proceed similarly. We obtain the equations
\begin{align}
  z^{p-1}+z^{p-2}+\dots + z^{p-q}+\frac{1}{2}z^{p-q-1}+\frac{1}{2}z^{p-q-2}+\dots+\frac{1}{2} & = 0\,,\\
  y^{p-1}+y^{p-2}+\dots + y^{q}+2y^{q-1}+2y^{q-2}+\dots+2 & = 0 \,,
\end{align}
for variables $z = 1/y= \mathrm{e}^{2 L_0 \lambda}$. For their roots $\mathrm{e}^{2 L_0 \mu_{2,k}}$ and $\mathrm{e}^{-2 L_0 \mu_{2,k}}$, respectively, we obtain the formulae
\begin{align}
  \prod_{k=1}^{p-1} (1-\mathrm{e}^{2 L_0 \mu_{2,k}}) & = q+\frac{1}{2}(p-q) = \frac{1}{2}(p+q)\,,\\
  \prod_{k=1}^{p-1} (1-\mathrm{e}^{-2 L_0 \mu_{2,k}}) & = p-q+2q = p+q\,.
\end{align}
The determinants are for the first branch cut
$$
  \mathrm{det}(2) = 2 L_0 \prod_{k=1}^{p-1} (1-\mathrm{e}^{-2 L_0 \mu_{2,k}}) = 2L_0 (p+q) = 2L
$$
and for the second cut
$$
  \mathrm{det}(2) = - 2 L_0 \prod_{k=1}^{p-1} (1-\mathrm{e}^{2 L_0 \mu_{2,k}}) = -2L_0 \frac{1}{2}(p+q) = -L\,.
$$

Finally, we solve the case $p=q=1$ for $\alpha=\pm 2$. The equation \eqref{eq:z3} does not have a solution, hence there are only roots of the equation $\mathrm{e}^{2L_0\lambda} = 1$, $\lambda_{1j} = \frac{j\pi i}{L_0}$, $j\in\mathbb{Z}\backslash \{0\}$. This problem has been solved in \cite{FL19} and the result is 
$\mathrm{det}\,(\pm 2) = 2L_0 = L$ for the first cut and $\mathrm{det}\,(\pm 2) = -2L_0 = -L$ for the second one.

\section{General position of the Dirac damping}\label{sec:4}
In this section, we generalise the results of the previous section to irrational positions of the $\delta$-damping. We will use results on positions of the zeros of certain functions. 
The following lemma summarises the results of Theorems 12.4 and 12.5 of \cite{BC63}.
\begin{lemma}\label{lem:bc}
Let 
\begin{equation}
g(\lambda) = \sum_{j=0}^n r_j \mathrm{e}^{\beta_j \lambda}\,,\label{eq:bellman_cooke}
\end{equation} 
with $0 = \beta_0 <\beta_1<\dots <\beta_n$ and $r_j \ne 0$, $j=0,\dots, n$ are complex numbers. 
\begin{enumerate}
\item[a)] There is a positive number $c_1$ such that all zeros of $g(\lambda)$ lie in the strip $|\mathrm{Re\,}\lambda|<c_1$. 
\item[b)] Let $R$ be the rectangle $|\mathrm{Re\,}\lambda|<c_1$, $|\mathrm{Im}\,\lambda-A|\leq B$ such that no zeros of $g(\lambda)$ lie on the boundary of $R$. Then provided $c_1$ is sufficiently large, the number $n(R)$ of zeros of $g(\lambda)$ in the rectangle $R$ satisfies
$$
  -n+\frac{B}{\pi}(\beta_n-\beta_0)\leq n(R) \leq n+\frac{B}{\pi}(\beta_n-\beta_0)\,.
$$
\end{enumerate}
\end{lemma}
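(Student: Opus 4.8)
The plan is to prove the two parts separately: part a) by a dominant-balance argument at the two ends of the strip, and part b) by applying the argument principle to the boundary of the rectangle and estimating the variation of $\arg g$ along its four sides. Throughout I write $x = \mathrm{Re}\,\lambda$ and keep in mind the ordering $0 = \beta_0 < \beta_1 < \dots < \beta_n$ with all $r_j \ne 0$.

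For part a) I would factor out the extreme exponentials. Writing $g(\lambda) = r_n \mathrm{e}^{\beta_n \lambda}\bigl(1 + \sum_{j=0}^{n-1} \tfrac{r_j}{r_n}\mathrm{e}^{(\beta_j - \beta_n)\lambda}\bigr)$ and noting that $|\mathrm{e}^{(\beta_j - \beta_n)\lambda}| = \mathrm{e}^{(\beta_j - \beta_n)x} \to 0$ as $x \to +\infty$ since $\beta_j - \beta_n < 0$, the bracketed factor lies within $\tfrac12$ of $1$ once $x$ exceeds some $c_1^{+}$; in particular $g$ has no zero there. Symmetrically, factoring out $r_0\,\mathrm{e}^{\beta_0\lambda} = r_0$ and using $\beta_j - \beta_0 > 0$ for $j \ge 1$ shows $g$ has no zero for $x < -c_1^{-}$. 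Taking $c_1 = \max\{c_1^{+}, c_1^{-}\}$ confines all zeros to $|x| < c_1$.

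For part b), the number of zeros in $R$ equals $n(R) = \tfrac{1}{2\pi}\,\Delta_{\partial R}\arg g(\lambda)$ by the argument principle (there are no zeros on $\partial R$ by hypothesis), and I would estimate the change of $\arg g$ along each side of $\partial R$, traversed counterclockwise. On the right side $\mathrm{Re}\,\lambda = c_1$, enlarging $c_1$ if necessary so that the bracketed factor from part a) has modulus in $(\tfrac12,\tfrac32)$ and hence argument in $(-\tfrac{\pi}{2},\tfrac{\pi}{2})$, one gets $\arg g = \beta_n\,\mathrm{Im}\,\lambda + \mathrm{const} + O(1)$, so the contribution of that side is $2B\beta_n$ up to an error at most $\pi$. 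On the left side $\mathrm{Re}\,\lambda = -c_1$ the analogous argument with the factor $r_0$ gives a contribution bounded by $\pi$ in modulus. On each horizontal side $\mathrm{Im}\,\lambda = A \pm B$ the function restricts to $g(t + i(A\pm B)) = \sum_{j=0}^{n}\bigl(r_j \mathrm{e}^{i\beta_j(A\pm B)}\bigr)\mathrm{e}^{\beta_j t}$, whose real and imaginary parts are real exponential sums with $n+1$ terms; by a classical Descartes-type bound each has at most $n$ real zeros, so the plane curve $t \mapsto g(t + i(A\pm B))$ crosses the coordinate axes boundedly often and its argument varies by an amount of order $n$. Summing the four contributions and using $\beta_0 = 0$ gives $2\pi n(R) = 2B(\beta_n - \beta_0) + O(n)$; a careful accounting of the constants, exactly as in \cite{BC63}, sharpens this to the stated two-sided bound $-n + \tfrac{B}{\pi}(\beta_n - \beta_0) \le n(R) \le n + \tfrac{B}{\pi}(\beta_n - \beta_0)$.

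The hard part will be the horizontal-side estimate: one must control the argument variation of $g$ along a segment on which $|g|$ ranges from exponentially large (near the right edge $\mathrm{Re}\,\lambda = c_1$) down to bounded (near the left edge), and the clean bound relies on the classical fact that a real exponential sum $\sum_{j=0}^{n} c_j \mathrm{e}^{\beta_j t}$ with real frequencies $\beta_j$ has at most $n$ real zeros. The vertical-side estimates, by contrast, are just the dominant-balance computation of part a) made quantitative, and the enlargement of $c_1$ they require is precisely the hypothesis "$c_1$ sufficiently large" in the statement.
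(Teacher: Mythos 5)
The paper offers no proof of this lemma at all: it is stated as a summary of Theorems~12.4 and~12.5 of Bellman--Cooke \cite{BC63} and used as a black box. Your sketch reconstructs essentially the classical Bellman--Cooke argument, and its two pillars are the right ones: dominant balance of the extreme exponentials $r_n\mathrm{e}^{\beta_n\lambda}$ and $r_0$ confines the zeros to a strip (part a), and the argument principle on $\partial R$ with the vertical sides controlled by that same dominant balance and the horizontal sides controlled by the fact that a real exponential sum with $n+1$ terms and distinct real frequencies has at most $n$ real zeros (part b). One honest caveat: as written, your horizontal-side estimate gives at most $(n+1)\pi$ of argument variation per side (at most $n$ crossings of the imaginary axis partition the segment into at most $n+1$ subintervals, on each of which the curve stays in a half-plane), so the four-side total yields $|n(R)-\tfrac{B}{\pi}\beta_n|\leq n+1+o(1)$ rather than the stated constant $n$; you defer the sharpening to \cite{BC63} rather than carrying it out. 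Since the paper's downstream use (Theorem~\ref{thm:fj}, where $n=3$ and only a $j$-independent bound on $\mathrm{Im}\,f_j$ is needed) is insensitive to the exact constant, this is a cosmetic shortfall rather than a structural one, but a self-contained proof of the lemma as stated would need that last piece of bookkeeping. You should also note explicitly that the degenerate case where $\mathrm{Re}\,g$ vanishes identically on a horizontal side (all $\mathrm{Re}(r_j\mathrm{e}^{i\beta_j(A\pm B)})=0$) is harmless, since then the argument is locally constant along that side.
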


With the use of this lemma, we give the following theorem.

\begin{theorem}\label{thm:fj}
For a given $a\in (0,L/2]$ and $\alpha \ne \pm 2$ let $\lambda_j(a)$ be the $j$-th eigenvalue in the upper half-plane sorted in the non-decreasing order according to the imaginary part. Then $\lambda_j(a) = \frac{j\pi i}{L}+ f_j(a)$, where 
\begin{enumerate}
\item[a)] $|\mathrm{Re\,}f_j(a)|<c_1$,
\item[b)] $|\mathrm{Im\,}f_j(a)|<c_2$, where the constants $c_1$, $c_2$ are independent of $j$ and $a$, 
\item[c)] $f_j(a)$ are analytic functions in $a$ with at most algebraic singularities. If for certain $a_0$ a finite number of $\lambda_j(a)$ have the same imaginary part, one may need to interchange their indices to get the analyticity.
\end{enumerate}
A similar claim holds for the eigenvalues in the lower half-plane.
\end{theorem}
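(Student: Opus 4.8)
The plan is to recast the eigenvalue equation~\eqref{eq:basic} as an exponential sum of the form~\eqref{eq:bellman_cooke} and then read off~(a) and~(b) from Lemma~\ref{lem:bc}, while~(c) will come from the analytic implicit function theorem. Multiplying~\eqref{eq:basic} by $4\mathrm{e}^{L\lambda}$ (which never vanishes) yields the equivalent equation $g_a(\lambda)=0$, where
\[
  g_a(\lambda) = (2+\alpha)\,\mathrm{e}^{2L\lambda} - \alpha\,\mathrm{e}^{2a\lambda} - \alpha\,\mathrm{e}^{2(L-a)\lambda} + (\alpha-2)
\]
(for $a=L/2$ the two middle terms coalesce into $-2\alpha\,\mathrm{e}^{L\lambda}$). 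Since $\alpha\ne\pm 2$ the extreme coefficients $2+\alpha$ and $\alpha-2$ are non-zero, so $g_a$ has the shape~\eqref{eq:bellman_cooke} with $\beta_0=0$, $\beta_n=2L$ and $n\le 3$; moreover $\lambda=0$ is never an eigenvalue, so the spectrum of $H_\alpha$ coincides with the zero set of $g_a$.

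For~(b), and for the very splitting $\lambda_j(a)=\tfrac{j\pi i}{L}+f_j(a)$, I would apply Lemma~\ref{lem:bc}(b) to rectangles $R$ with $|\mathrm{Re}\,\lambda|<c_1$ and $\mathrm{Im}\,\lambda$ ranging over an interval of length $2B$: the number of eigenvalues in $R$ equals $\tfrac{2LB}{\pi}+O(1)$, the $O(1)$ bounded by $n\le 3$. The points $\tfrac{j\pi i}{L}$ have the same counting function up to $O(1)$; hence, sorting the eigenvalues in the upper half-plane by imaginary part, $\mathrm{Im}\,\lambda_j(a)=\tfrac{j\pi}{L}+O(1)$ with a bound depending only on $n$ and an absolute constant from Lemma~\ref{lem:bc}. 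This is independent of $j$ and of $a$ (only the fixed number of exponentials enters), which proves~(b).

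For~(a), Lemma~\ref{lem:bc}(a) confines all zeros of $g_a$ to a strip $|\mathrm{Re}\,\lambda|<c_1$, so $|\mathrm{Re}\,f_j(a)|=|\mathrm{Re}\,\lambda_j(a)|<c_1$. The delicate issue here --- which I expect to be the main obstacle --- is to make $c_1$ independent of $a$: the widest gap between consecutive exponents of $g_a$ equals $2L-2(L-a)=2a$ and degenerates as $a\to 0$, so the strip furnished by Lemma~\ref{lem:bc}(a) a priori spreads out. I would split the argument. For $a$ in a compact subinterval of $(0,L/2]$ the exponents and coefficients of $g_a$ vary in a compact set, and the Rouch\'e-type construction behind Lemma~\ref{lem:bc} depends continuously on these data, so $c_1$ can be chosen uniformly; this already covers what Section~\ref{sec:4} needs. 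To push down to $a$ near $0$ I would use that $g_a(\lambda)\to 2(\mathrm{e}^{2L\lambda}-1)$ locally uniformly as $a\to 0^+$, together with the dissipativity, respectively accretivity, of $H_\alpha$: this confines the spectrum to $\{\mathrm{Re}\,\lambda\le 0\}$ when $\mathrm{Re}\,\alpha\ge 0$ and to $\{\mathrm{Re}\,\lambda\ge 0\}$ when $\mathrm{Re}\,\alpha\le 0$, giving a uniform bound on one side for free, and then a direct estimate of $g_a$ in the complementary half-plane supplies the other side.

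For~(c), the map $(a,\lambda)\mapsto g_a(\lambda)$ is jointly analytic. Off the locally finite set of values of $a$ for which $g_a$ has a multiple zero we have $\partial_\lambda g_a\ne 0$ at every eigenvalue, so the analytic implicit function theorem makes $\lambda_j(a)$, hence $f_j(a)$, analytic in $a$; at an exceptional value $a_0$ the colliding zeros assemble into Puiseux series in $(a-a_0)^{1/m}$, i.e.\ $f_j$ has at most an algebraic singularity there. Where two eigenvalues happen to have the same imaginary part --- a discrete set of $a$, by~(b) and analyticity --- analyticity of $f_j$ is restored by permuting indices, exactly as in the statement; the bounds from~(a)--(b) persist along all these branches, which finishes the proof. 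The only step I anticipate needing genuine work is the uniform-in-$a$ control of the real parts in~(a).
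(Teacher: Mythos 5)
Your overall route is the same as the paper's: rewrite \eqref{eq:basic} as the four-term exponential sum \eqref{eq:spcond3}, obtain (a) from Lemma~\ref{lem:bc}(a), obtain (b) and the splitting $\lambda_j=\frac{j\pi i}{L}+f_j$ from the counting estimate of Lemma~\ref{lem:bc}(b) with $n=3$ and $\beta_n-\beta_0=2L$, and obtain (c) from analytic perturbation theory (the paper cites Kato, Thm.~VII.1.8, which is precisely your implicit-function-theorem-plus-Puiseux argument). Parts (b) and (c) of your write-up are fine; in particular the constant in (b) really is independent of $a$, since only $n$ and $\beta_n-\beta_0$ enter the count.

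The one place where you go beyond the paper is the attempt to make $c_1$ in (a) uniform down to $a=0$, and there your proposed patch does not work --- indeed no patch can. For $\mathrm{Re}\,\alpha>1$ (equivalently $|\alpha|>|\alpha-2|$) the two lowest exponentials dominate near the vertical line $\mathrm{Re}\,\lambda=\frac{1}{2a}\log\frac{|\alpha-2|}{|\alpha|}<0$, and a Rouch\'e comparison of $g_a$ with $-\alpha\mathrm{e}^{2a\lambda}+(\alpha-2)$ produces genuine eigenvalues $\lambda\approx\frac{1}{2a}\log\frac{\alpha-2}{\alpha}+\frac{k\pi i}{a}$, $k\in\mathbb{Z}$, whose real parts tend to $-\infty$ as $a\to0^+$. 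These lie in the left half-plane, so dissipativity does not exclude them, and your ``direct estimate in the complementary half-plane'' is exactly the estimate that fails: it needs $\mathrm{e}^{2a\,\mathrm{Re}\lambda}$ to be small, which forces $|\mathrm{Re}\,\lambda|\gtrsim 1/a$. The mirror phenomenon occurs for $\mathrm{Re}\,\alpha<-1$ at the top pair of exponents. So the $a$-uniformity of $c_1$ asserted in the statement is actually false near $a=0$ for such $\alpha$; only the locally uniform version (your compact-subinterval argument) is available, and, as you correctly observe, that is all the continuity argument of Section~\ref{sec:4} uses. The paper's own proof does not address this point at all --- it invokes Lemma~\ref{lem:bc}(a) for fixed $a$ --- so you have identified an overstatement in the theorem rather than introduced a gap of your own.
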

\begin{proof}
The spectral condition \eqref{eq:basic} can be rewritten as
$$
  2\sinh{(L \lambda)} + \alpha\cosh{(L\lambda)} = \alpha \cosh{((L-2a)\lambda))}
$$
or 
\begin{equation}
  (2+\alpha)\mathrm{e}^{2L\lambda}-\alpha \mathrm{e}^{(2L-2a)\lambda}-\alpha \mathrm{e}^{2a\lambda}+\alpha-2 = 0\,.\label{eq:spcond3}
\end{equation}
The condition \eqref{eq:spcond3} is of the form \eqref{eq:bellman_cooke} with $n = 3$ and $\beta_n = 2L$. Note that the leading and the last coefficients are non-zero for $\alpha \ne \pm 2$. For $\alpha = 0$ we have $n = 1$. 
\begin{enumerate}
\item[a)] The claim a) directly follows from claim a) in Lemma~\ref{lem:bc}. 
\item[b)] Let us choose the rectangle $R$ in Lemma~\ref{lem:bc} with $A = B$ (if there are eigenvalues on the positive real axis, we shift the rectangle slightly below, i.e. choose $A = B-\varepsilon$ with small $\varepsilon$). Then from claim b) in Lemma~\ref{lem:bc} it follows (for the eigenvalue near the upper side of the rectangle it holds $\mathrm{Im\,}\lambda_j \sim 2B$)
\begin{align}
  -3 + \frac{2L}{\pi} \frac{\mathrm{Im\,\lambda_j}}{2}\leq j \leq 3 + \frac{2L}{\pi} \frac{\mathrm{Im\,\lambda_j}}{2}\,,\\
  -\frac{3\pi}{L} + \frac{j\pi}{L} \leq \mathrm{Im\,\lambda_j} \leq \frac{3\pi}{L} + \frac{j\pi}{L}\,.
\end{align}
Hence the claim b) with $c_2 = \frac{3\pi}{L}$ holds.
\item[c)] The claim follows from \cite[Thm. VII 1.8]{Kat95}. The eigenvalues in dependence of the parameter $a$ are either continuous lines or two or several lines can meet at one point and emanate from this point at different angles than the incoming angles. Note that to keep the continuity of the eigenvalues, the indices of two or more eigenvalues (as defined in Thm.~\ref{thm:fj}) may be interchanged. 
\end{enumerate}
\end{proof}

We use Theorem~\ref{thm:fj} to prove the continuity of 
the determinant in the position of the Dirac damping.

\begin{theorem}\label{thm:cont1}
Let $\alpha \ne \pm 2$ and $a\in (0,L/2]$. Then the spectral determinant is continuous in $a$. 
\end{theorem}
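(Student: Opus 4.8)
The plan is to prove that $a\mapsto\zeta'(0;a)$ is continuous on $(0,L/2]$, where I now write $\zeta(s;a)$ to display the dependence on the position of the damping; this gives the claim since $\mathrm{det}(\alpha)=\mathrm{e}^{-\zeta'(0)}$. I would fix $a_{0}$ and a compact neighbourhood $V\subset(0,L/2]$ of it and work on $V$. A preliminary point is to pin down the branch cut uniformly: by Theorem~\ref{thm:fj} any bounded region of $\mathbb{C}$ contains only finitely many eigenvalues, these depend continuously on $a$, and all remaining eigenvalues lie in thin horizontal strips about the points $\pm\frac{j\pi i}{L}$; together with the branch-cut independence of the determinant recalled in Section~\ref{sec:model} (crossing finitely many eigenvalues does not change $\mathrm{det}$), this lets me assume one admissible branch cut serves for every $a\in V$.

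Next I would split $\zeta(s;a)=\zeta_{\le N}(s;a)+\zeta_{>N}(s;a)$ at a large fixed index $N$. The finite part $\zeta_{\le N}$ is an entire function of $s$ that is jointly continuous in $(s,a)$, since each $\lambda_{j}(\cdot)$ is continuous (Theorem~\ref{thm:fj}(c)) and stays off the cut; hence $\partial_{s}\zeta_{\le N}(0;a)$ is continuous in $a$. For the tail I would use the uniform asymptotics $\lambda^{\pm}_{j}(a)=\pm\frac{j\pi i}{L}+f^{\pm}_{j}(a)$, $|f^{\pm}_{j}(a)|\le c_{1}+c_{2}$ (Theorem~\ref{thm:fj}(a)--(b)), and write $\lambda^{\pm}_{j}(a)^{-s}=(\pm\tfrac{j\pi i}{L})^{-s}\bigl(1+O(1/j)\bigr)^{-s}$. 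Subtracting the $a$-independent model sum $\sum_{j>N}\bigl[(\tfrac{j\pi i}{L})^{-s}+(-\tfrac{j\pi i}{L})^{-s}\bigr]$ — a finite modification of $2\cos\frac{\pi s}{2}\,(L/\pi)^{s}\,\zeta_{\mathrm R}(s)$, whose continuation is explicit and whose contribution to $\zeta'(0)$ is a constant — and expanding the binomial series, the terms of order $|f^{\pm}_{j}(a)/j|^{2}$ and higher form a series that converges absolutely and locally uniformly for $\re s>-1$, is analytic there, and is continuous in $a$; its $s$-derivative at $0$ is therefore continuous in $a$ as well.

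The remaining piece is, up to explicit prefactors, $s$ times the Dirichlet-type sums $\sum_{j>N}f^{\pm}_{j}(a)\,j^{-s-1}$, and showing that this extends across $s=0$ with its value and its $s$-derivative there continuous in $a$ is the step I expect to be the main obstacle. Here one must use more than boundedness of the deviations: $f^{\pm}_{j}(a)$ depends on $j$ only through the phase $2\pi a j/L$ (it is a root of a fixed exponential equation with that phase as a parameter), which equidistributes for irrational $a/L$ and samples periodically for rational $a/L$ — the latter being exactly the mechanism that produced the Hurwitz zeta functions in Section~\ref{sec:3}. Expanding the phase dependence in its rapidly convergent Fourier series recasts each tail sum as a series of Lerch-type zeta functions $\mathrm{Li}_{s+1}\!\bigl(\mathrm{e}^{2\pi i n a/L}\bigr)$ with coefficients continuous in $a$, of which only the $n=0$ term is singular at $s=0$ (a simple pole, cancelled by the prefactor $s$, with residue continuous in $a$), the rest being analytic near $s=0$ and continuous in $a$. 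The delicate bookkeeping is to make this expansion uniform for $a$ in $V$, absorbing the small-divisor behaviour near rational $a/L$ into the Fourier decay; granting it, the functions $\zeta(\cdot;a)$ are holomorphic and locally uniformly bounded on a fixed disc about $s=0$ for $a\in V$ and converge pointwise as $a\to a_{0}$ (dominated convergence for $\re s>1$, using $|\lambda_{j}(a)|\ge c\,j$), so Vitali's theorem gives $\zeta'(0;a)\to\zeta'(0;a_{0})$ and hence the continuity of $\mathrm{det}(\alpha)$ in $a$. The same argument applies to the other branch cut.
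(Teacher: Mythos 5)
Your opening step is exactly the paper's: by Theorem~\ref{thm:fj} the eigenvalues satisfy $|\lambda_j(a)|\geq c\,j$ with arguments confined to a bounded set, so $\sum_j|\lambda_j(a)^{-s}|$ converges for $\re s>1$ uniformly in $a$, and dominated convergence gives $\zeta(s,a)\to\zeta(s,a_0)$ there. Where you and the paper part ways is the passage to $s=0$: the paper simply asserts that since both functions ``can be uniquely continued to zero'', the derivatives at zero converge. You are right to treat this as the real issue --- pointwise convergence of holomorphic functions on a half-plane does not control their continuations at an exterior point (consider $g_n(s)=n^{-1}\mathrm{e}^{n(1-s)}$, which tends to $0$ on $\re s>1$ while $g_n(0)\to\infty$), so some locally uniform bound near $s=0$ is genuinely required, and invoking Vitali's theorem is the correct shape of the repair. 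In this respect your diagnosis goes beyond the paper's own argument.

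The gap is that you do not actually establish the locally uniform boundedness on a disc about $s=0$; the crux --- continuing $\sum_{j>N}f_j(a)\,j^{-s-1}$ across $s=0$ uniformly in $a$ --- is precisely where you write ``granting it''. The route you sketch is also shakier than it looks: while it is true that $f_j(a)$ solves an equation depending on $j$ only through the phase $\mathrm{e}^{2\pi i a j/L}$, the relevant root need not be a single-valued analytic function of that phase on the whole unit circle (branches collide, and the labelling of ``the $j$-th'' root changes there), so the ``rapidly convergent Fourier series'' and the resulting Lerch/polylogarithm expansion are not available without further argument; and the small-divisor behaviour of $\mathrm{Li}_{s+1}(\mathrm{e}^{2\pi i n a/L})$ near rational $a/L$, which you flag, is exactly the uniformity you would need and do not supply. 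A cleaner way to close the hole (for both your argument and the paper's) is to represent $\zeta(s,a)$ by a contour integral of $\lambda^{-s}\,\partial_\lambda\log F(\lambda,a)$ against the secular function $F$ of \eqref{eq:basic}, whose standard deformation yields an expression holomorphic near $s=0$ with manifestly continuous dependence on $a$; as written, your proposal replaces the paper's unproved assertion with a conditional argument, which is progress in locating the difficulty but does not yet constitute a proof.
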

\begin{proof}
We have 
$$
  |\lambda_j(a)^{-s}| = \left||\lambda_j|^{-\mathrm{Re}\,s}\mathrm{e}^{-i \,\mathrm{arg}\,(\lambda_j)\,\mathrm{Re}\,s}|\lambda_j|^{-i\, \mathrm{Im\,}s}\mathrm{e}^{\mathrm{arg}\,(\lambda_j) \,\mathrm{Im}\,s}\right| = |\lambda_j|^{-\mathrm{Re}\,s} |\mathrm{e}^{\mathrm{arg\,}(\lambda_j)}\,\mathrm{Im}\,s|\,.
$$
For any fixed $s$ with $\re s>1$ the sum $\sum_{j=1}^{\infty}|\lambda_j(a)^{-s}|$ is convergent. Let  $\zeta(s,a) = \sum_{j=1}^{\infty} \lambda_j(a)^{-s} $. Hence for $s$ with $\re s >1$ it holds due to the absolute summability of the sum that $\zeta(s,a_0) = \lim_{a\to a_0}\zeta(s,a)$. Both expressions can be uniquely continued to zero and hence $\zeta'(0,a_0) = \lim_{a\to a_0}\zeta'(0,a)$ and $\lim_{a\to a_0}\mathrm{det}(a) = \mathrm{det}(a_0)$.
\end{proof}

Similarly, we prove the continuity for $\alpha = \pm 2$.

\begin{theorem}
Let $\alpha = \pm 2$ and $a\in (0,L/2)$. Then the spectral determinant is continuous in $a$. 
\end{theorem}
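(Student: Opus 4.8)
The plan is to run the argument of Theorem~\ref{thm:cont1}, whose only real input is the eigenvalue localisation supplied by Theorem~\ref{thm:fj}; so the single new step is to establish the analogue of Theorem~\ref{thm:fj} at the critical values $\alpha=\pm2$. Once that is available, the passage to continuity of the determinant is identical to the one in the proof of Theorem~\ref{thm:cont1}.

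First I would rewrite the spectral condition. At $\alpha=\pm2$ one of the outer coefficients in~\eqref{eq:spcond3} vanishes, so~\eqref{eq:spcond3} is not literally of the Bellman--Cooke form~\eqref{eq:bellman_cooke}; dropping the vanishing term and dividing by a suitable exponential repairs this. For $\alpha=2$, dividing~\eqref{eq:spcond3} by $2\mathrm{e}^{2a\lambda}$ yields $2\mathrm{e}^{2(L-a)\lambda}-\mathrm{e}^{2(L-2a)\lambda}-1=0$ (exponents $0$, $2L-4a$, $2L-2a$), while for $\alpha=-2$, dividing by~$2$ yields $\mathrm{e}^{2(L-a)\lambda}+\mathrm{e}^{2a\lambda}-2=0$ (exponents $0$, $2a$, $2L-2a$). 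In both cases, precisely because $a\in(0,L/2)$, these three exponents are strictly ordered, $0=\beta_0<\beta_1<\beta_2=2(L-a)$, and all three coefficients are nonzero; hence the left-hand side has the form~\eqref{eq:bellman_cooke} with $n=2$. It is exactly here that $a\ne0$ and $a\ne L/2$ enter: at $a=L/2$ two exponents would coincide (the two reduced equations both collapsing to $\mathrm{e}^{L\lambda}=1$, consistent with the $p=q=1$ case of Section~\ref{sec:3}). The root $\lambda=0$, common to both reduced equations, is not an eigenvalue, as in Section~\ref{sec:3}, and is irrelevant in what follows.

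Next, fixing $a_0\in(0,L/2)$ and a compact interval $I\subset(0,L/2)$ containing $a_0$, I would repeat the proof of Theorem~\ref{thm:fj} for the reduced equation: Lemma~\ref{lem:bc}a) confines all eigenvalues to a strip $|\re\lambda|<c_1$, and Lemma~\ref{lem:bc}b) with $A=B$ (shifted slightly if there are eigenvalues on the real axis) gives, for the eigenvalues in the upper half-plane ordered by nondecreasing imaginary part, $\lambda_j(a)=\frac{j\pi i}{L-a}+f_j(a)$ with $|\mathrm{Im}\,f_j(a)|<c_2$, and symmetrically in the lower half-plane; analyticity of $f_j$ in $a$ up to finitely many index swaps follows from \cite[Thm.~VII.1.8]{Kat95}, exactly as in Theorem~\ref{thm:fj}c). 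The only structural difference from Theorem~\ref{thm:fj} is that the linear rate $\pi i/(L-a)$ now depends on $a$, but this is harmless: on the compact set $I$ it is bounded and bounded away from~$0$, so $c_1$, $c_2$ and a lower bound $|\lambda_j(a)|\ge c_3\,j$ with $c_3>0$ can be taken uniform in $j$ and in $a\in I$ (the finitely many small-$j$ eigenvalues stay away from~$0$ uniformly on~$I$ since $\lambda=0$ is an isolated simple zero of the reduced equation, with isolation uniform in $a\in I$).

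With these uniform bounds the proof of Theorem~\ref{thm:cont1} transfers essentially verbatim: for a fixed $s$ with $\re s>1$ one has $|\lambda_j(a)^{-s}|=|\lambda_j(a)|^{-\re s}\,|\mathrm{e}^{\mathrm{arg}(\lambda_j(a))\,\mathrm{Im}\,s}|\le C_s\,(c_3 j)^{-\re s}$ uniformly in $a\in I$, the argument of $\lambda_j(a)$ being bounded because the eigenvalues lie in a vertical strip; each $\lambda_j(a)$ is continuous in~$a$, and the finite reindexing is immaterial since $\zeta(s,a)=\sum_j\lambda_j(a)^{-s}$ is a sum over the multiset of eigenvalues; hence dominated convergence gives $\zeta(s,a_0)=\lim_{a\to a_0}\zeta(s,a)$ for all such~$s$, and uniqueness of the meromorphic continuation propagates this to $\zeta'(0,a_0)=\lim_{a\to a_0}\zeta'(0,a)$, so that $\mathrm{det}(a)=\mathrm{e}^{-\zeta'(0,a)}$ is continuous at the arbitrary point $a_0\in(0,L/2)$. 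The main obstacle is the middle part of this argument, namely putting the critical spectral condition into Bellman--Cooke form with all coefficients nonzero -- which is where the hypothesis $a\in(0,L/2)$, as opposed to $a=0$ or $a=L/2$, is genuinely used -- together with the uniform choice of the localisation constants on compact subintervals despite the $a$-dependent eigenvalue spacing; the remaining zeta-function manipulations are routine and identical to Theorem~\ref{thm:cont1}.
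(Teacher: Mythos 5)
Your proposal is correct and follows essentially the same route as the paper: reduce the spectral condition at $\alpha=\pm2$ to the same two Bellman--Cooke forms (noting that $a\ne L/2$ is needed so the exponents stay distinct), apply Lemma~\ref{lem:bc} to get $\lambda_j(a)=\frac{j\pi i}{L-a}+O(1)$ uniformly, and then rerun the argument of Theorem~\ref{thm:cont1}. Your added remarks on uniformity over compact subintervals of $(0,L/2)$ are a slight tightening of the paper's sketch but do not change the approach.
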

\begin{proof}
The proof is similar to the proofs of Theorems~\ref{thm:fj} and~\ref{thm:cont1}. The condition~\eqref{eq:spcond3} becomes 
$$
  \mathrm{e}^{2(L-a)\lambda}+\mathrm{e}^{2a\lambda}-2 = 0
$$
or 
$$
  2\mathrm{e}^{2(L-a)\lambda}-\mathrm{e}^{2(L-2a)\lambda}-1=0
$$
for $\alpha=-2$ and $\alpha  = 2$, respectively. Note that the theorem does not hold for $a = \frac{L}{2}$; in that case two of the terms in the above conditions have the same $\beta_j$'s and the assumption of Lemma~\ref{lem:bc} is not satisfied.

One can use Lemma~\ref{lem:bc} to conclude in both cases that
$$
  -\frac{2\pi}{L-a}+\frac{j\pi}{L-a}\leq \mathrm{Im\,}\lambda \leq \frac{2\pi}{L-a}+\frac{j\pi}{L-a}\,.
$$
We find $\lambda_j(a) = \frac{j\pi i}{L-a}+\frac{\tilde f_j(a)}{L-a}$, and similarly to Theorem~\ref{thm:fj} we argue that $\tilde f_j(a)$ has real and imaginary part bounded by constants independent of $j$ and $a$ and it is analytic in $a$. We conclude the proof by the same argument as in the proof of Theorem~\ref{thm:cont1}.
\end{proof}

We conclude the proof of Theorem~\ref{thm:main} by observing 
that the formulae~\eqref{eq:res1} and \eqref{eq:res2} are valid also for a general value of $a$.  Together with the fact that equation~\eqref{eq:basic} is symmetric in $a$ with respect to $L/2$, this proves Theorem~\ref{thm:main}.

We see that for the first branch cut the spectral determinant is continuous at $\alpha = -2$ having the value $L$ at this point and discontinuous at $\alpha = 2$. For the second branch cut the spectral determinant is discontinuous at $\alpha = -2$ and continuous at $\alpha = 2$.

\section*{Acknowledgements}
D.K.~was supported by the EXPRO grant No.~20-17749X 
of the Czech Science Foundation. 
J.L.~was supported by the Czech Science Foundation within the project 22-18739S.

\end{document}